\newtheorem{theo}{Theorem}
\newtheorem{lem}{Lemma}
\newtheorem{rem}{Remark}
\newtheorem{defn}{Definition}
\def\RR{\mathbb R}
\def\CC{\mathbb C}
\def\pmatrix{ \left( \begin{array} }
\def\endpmatrix{ \end{array} \right) }
\def\cc{\gamma}
\def\bfgam{\bm{\gamma}}
\def\bfpsi{\bm{\psi}}
\def\bfu{{\bf u}}
\def\bfy{{\bf y}}
\def\bfo{{\bf 0}}
\def\no{\noindent}
\def\diag{{\rm diag}}
\def\proof{\underline{Proof}\quad}
\def\QED{~\mbox{$\Box$}}
\def\phi{\varphi}
\def\P{{\cal P}}
\def\bfu{{\bf u}}
\def\bfv{{\bf v}}
\def\I{{\cal I}}
\def\P{{\cal P}}
\def\sigmd{{\dot\sigma}}
\def\O{\Omega}
\def\lam{\lambda}
\def\dd{\mathrm{d}}
\def\bfdel{\boldsymbol{\delta}}
\journalname{~}
\begin{document}

\title{Isospectral Property of Hamiltonian Boundary
Value Methods (HBVMs) and their blended implementation\thanks{Work
developed within the project ``Numerical methods and software for
differential equations''.}} 

\titlerunning{Blended HBVMs}        

\author{L.\,Brugnano \and
F.\,Iavernaro \and
 D.\,Trigiante }

\authorrunning{B.I.T.} 

\institute{Luigi Brugnano \at Dipartimento di Matematica,
Universit\`a di Firenze, Viale Morgagni 67/A, 50134 Firenze
(Italy).\\ \email{luigi.brugnano@unifi.it}
\and
Felice Iavernaro \at Dipartimento di Matematica, Universit\`a di Bari,
Via Orabona  4,  70125 Bari (Italy).\\ \email{
felix@dm.uniba.it}
\and
Donato Trigiante \at Dipartimento di Energetica, Universit\`a di Firenze,
Via Lombroso 6/17, 50134 Firenze (Italy).\\ \email{
trigiant@unifi.it} }

\date{February 6, 2010.}

\maketitle

\begin{abstract}
One main issue, when numerically integrating autonomous
Hamiltonian systems, is the long-term conservation of some of its
invariants, among which the Hamiltonian function itself. Recently,
a new class of methods, named {\em Hamiltonian Boundary Value
Methods (HBVMs)} has been introduced and analysed \cite{BIT},
which are able to exactly preserve polynomial Hamiltonians of
arbitrarily high degree. We here study a further property of such
methods, namely that of having, when cast as Runge-Kutta methods,
a matrix of the Butcher tableau with the same spectrum (apart the
zero eigenvalues) as that of the corresponding Gauss-Legendre
method, independently of the considered abscissae. Consequently,
HBVMs are always perfectly $A$-stable methods. Moreover, this
allows their efficient {\em blended} implementation, for solving
the generated discrete problems.

\keywords{polynomial Hamiltonian \and energy preserving methods
\and extended collocation methods \and Hamiltonian Boundary Value
Methods \and HBVMs \and block Boundary Value Methods \and blended
implicit methods \and Runge-Kutta methods \and blended iteration}
\subclass{65P10 \and  65L05 \and 65L06 \and 65L80 \and 65H10}
\end{abstract}

\section{Introduction}\label{intro}

Hamiltonian problems are of great interest in many fields of
application, ranging from the macro-scale of celestial mechanics,
to the micro-scale of molecular dynamics. They have been deeply
studied, from the point of view of the mathematical analysis,
since two centuries. Their numerical solution is a more recent
field of investigation, which has led to define symplectic
methods, i.e., the simplecticity of the discrete map, considering
that, for the continuous flow, simplecticity implies the
conservation of $H(y)$. However, the conservation of the
Hamiltonian and simplecticity of the flow cannot be satisfied at
the same time unless the integrator produces the exact solution
(see \cite[page\,379]{HLW}). More recently, the conservation of
energy has been approached by means of the definition of the {\em
discrete line integral}, in a series of papers
\cite{IP1,IP2,IT1,IT2,IT3}, leading to the definition of {\em
Hamiltonian Boundary Value Methods (HBVMs)}
\cite{BIS,BIT2,BIT,BIT1}, which are a class of methods able to
preserve, for the discrete solution, polynomial Hamiltonians of
arbitrarily high degree (and then, a {\em practical} conservation
of any sufficiently differentiable Hamiltonian). In more details,
in \cite{BIT}, HBVMs based on Lobatto nodes have been analysed,
whereas in \cite{BIT1} HBVMs based on Gauss-Legendre abscissae
have been considered. In the last reference, it has been actually
shown that both formulae are essentially equivalent to each other,
in the sense that they share the same order (twice the number of
{\em fundamental stages}) and stability properties, and both
methods provide the very same numerical solution, when the number
of the so called {\em silent stages} tends to
infitiny.\footnote{Actually, they both provide the same numerical
solution also when the Hamiltonian is a polynomial and the number
of {\em silent stages} is high enough to ensure the conservation
property of the Hamiltonian function itself (see
(\ref{discr_lin})).} In this paper this conclusion if further
supported, since we prove that all such methods, when cast as
Runge-Kutta methods, have the corresponding matrix of the tableau,
whose nonzero eigenvalues coincide with those of the corresponding
Gauss-Legendre formula (isospectral property of HBVMs).

This property can be used to define an efficient iteration for
solving the discrete problems generated by the methods, via their
{\em blended implementation}. Indeed, after posing HBVMs in block BVM form, they
can be recast in the framework of {\em blended implicit methods}, which
have been studied in a series of papers
\cite{B00,BM02,BM04,BM07,BM08,BM09,BM09a,BMM06,BT2} (see also
C.\,Magherini's PhD Thesis \cite{M04}). The latter methods have been
successfully implemented in the two computational codes {\tt BiM} and {\tt
BiMD} \cite{BIMcode}; the latter code is also included in the
current release of the ``Test Set for IVP Solvers''
\cite{TestSet}.

With this premise, the structure of the paper is the following: in
Section~\ref{hbvms} the basic facts about HBVMs are recalled; in
Section~\ref{iso} we state the main result of this paper,
concerning the isospectral property; in Section~\ref{blend}
the discrete problem to be actually solved is defined; in
Section~\ref{blend1} it is shown that a corresponding {\em blended
iteration} can be devised for its efficient solution, which can be
tuned by choosing a free parameter; in Section~\ref{linear} the
optimal choice of the free parameter is done, on the basis of the
isospectral property of HBVM$(k,s)$, by using a linear analysis of
convergence; finally, in Section~\ref{fine} a few concluding
remarks are given.

\section{Hamiltonian Boundary Value Methods}\label{hbvms}

The arguments in this section are worked out starting from the
arguments used in \cite{BIT,BIT1} to introduce and analyse HBVMs. We
consider canonical Hamiltonian problems in the form
\begin{equation}
\label{Hamilton} \dot y = J\nabla H(y),  \qquad y(t_0) =
y_0\in\RR^{2m},
\end{equation}

\no where $J$ is a skew-symmetric constant matrix, and the
Hamiltonian $H(y)$ is assumed to be sufficiently differentiable.
The key formula which HBVMs rely on, is the {\em line integral}
and the related property of conservative vector fields:
\begin{equation}\label{Hy}
H(y_1) - H(y_0) = h\int_0^1 \sigmd(t_0+\tau h)^T\nabla
H(\sigma(t_0+\tau h))\dd\tau,
\end{equation}

\no for any $y_1 \in \RR^{2m}$, where $\sigma$ is any smooth
function such that
\begin{equation}
\label{sigma}\sigma(t_0) = y_0, \qquad\sigma(t_0+h) = y_1.
\end{equation}

\no Here we consider the case where $\sigma(t)$ is a polynomial of
degree $s$, yielding an approximation to the true solution $y(t)$
in the time interval $[t_0,t_0+h]$. The numerical approximation
for the subsequent time-step, $y_1$, is then defined by
(\ref{sigma}).

After introducing a set of $s$ distinct abscissae
\begin{equation}\label{ci}0<c_{1},\ldots ,c_{s}\le1,\end{equation}

\no we set
\begin{equation}\label{Yi}Y_i=\sigma(t_0+c_i h), \qquad
i=1,\dots,s,\end{equation}

\no so that $\sigma(t)$ may be thought of as an interpolation
polynomial, interpolating the {\em fundamental stages} $Y_i$,
$i=1,\dots,s$, at the abscissae (\ref{ci}). We observe that, due
to (\ref{sigma}), $\sigma(t)$ also interpolates the initial
condition $y_0$.

\begin{rem}\label{c0} Sometimes, the interpolation at $t_0$ is
explicitly  required. In such a case, the extra abscissa $c_0=0$ is
formally added to (\ref{ci}). This is the case, for example, of a
Lobatto distribution of the abscissae \cite{BIT}.\end{rem}

Let us consider the following expansions of $\dot \sigma(t)$ and
$\sigma(t)$ for $t\in [t_0,t_0+h]$:
\begin{equation}
\label{expan} \dot \sigma(t_0+\tau h) = \sum_{j=1}^{s} \gamma_j
P_j(\tau), \qquad \sigma(t_0+\tau h) = y_0 + h\sum_{j=1}^{s}
\gamma_j \int_{0}^\tau P_j(x)\,\dd x,
\end{equation}

\no where $\{P_j(t)\}$ is a suitable basis of the vector space of
polynomials of degree at most $s-1$ and  the  (vector)
coefficients $\{\gamma_j\}$ are to be determined.  We shall consider an {\em
orthonormal basis} of polynomials on the interval $[0,1]$, i.e.:
\begin{equation}\label{orto}\int_0^1 P_i(t)P_j(t)\dd t = \delta_{ij}, \qquad
i,j=1,\dots,s,\end{equation}

\no where $\delta_{ij}$ is the Kronecker symbol, and $P_i(t)$ has
degree $i-1$. Such a basis can be readily obtained as
$$P_i(t) = \sqrt{2i-1}\,\hat P_{i-1}(t), \qquad
i=1,\dots,s,$$

\no with $\hat P_{i-1}(t)$ the shifted Legendre polynomial, of
degree $i-1$, on the interval $[0,1]$.

\begin{rem}\label{recur}
From the properties of shifted Legendre polynomials (see, e.g.,
\cite{AS} or the Appendix in \cite{BIT}), one readily obtains that
the polynomials $\{P_j(t)\}$ satisfy the three-terms recurrence
relation:
\begin{eqnarray*}
P_1(t)&\equiv& 1, \qquad P_2(t) = \sqrt{3}(2t-1),\\
P_{j+2}(t) &=& (2t-1)\frac{2j+1}{j+1} \sqrt{\frac{2j+3}{2j+1}}
P_{j+1}(t) -\frac{j}{j+1}\sqrt{\frac{2j+3}{2j-1}} P_j(t), \quad
j\ge1.
\end{eqnarray*}
\end{rem}

We shall also assume that $H(y)$ is a polynomial, which implies
that the integrand in \eqref{Hy} is also a polynomial so that the
line integral can be exactly computed by means of a suitable
quadrature formula. It is easy to observe that in general, due to
the high degree of the integrand function,  such quadrature
formula cannot be solely based upon the available abscissae
$\{c_i\}$: one needs to introduce an additional set of abscissae
$\{\hat c_1, \dots,\hat c_r\}$, distinct from the nodes $\{c_i\}$,
in order to make the quadrature formula exact:
\begin{eqnarray} \label{discr_lin}
\displaystyle \lefteqn{\int_0^1 \sigmd(t_0+\tau h)^T\nabla
H(\sigma(t_0+\tau h))\mathrm{d}\tau   =}\\ && \sum_{i=1}^s \beta_i
\sigmd(t_0+c_i h)^T\nabla H(\sigma(t_0+c_i h)) + \sum_{i=1}^r \hat
\beta_i \sigmd(t_0+\hat c_i h)^T\nabla H(\sigma(t_0+\hat c_i h)),
\nonumber
\end{eqnarray}

\no where $\beta_i$, $i=1,\dots,s$, and $\hat \beta_i$,
$i=1,\dots,r$, denote the weights of the quadrature formula
corresponding to the abscissae $\{c_i\}\cup\{\hat c_i\}$, i.e.,
\begin{eqnarray}\nonumber
\beta_i &=& \int_0^1\left(\prod_{ j=1,j\ne i}^s
\frac{t-c_j}{c_i-c_j}\right)\left(\prod_{j=1}^r
\frac{t-\hat c_j}{c_i-\hat c_j}\right)\mathrm{d}t, \qquad i = 1,\dots,s,\\
\label{betai}\\ \nonumber \hat\beta_i &=& \int_0^1\left(\prod_{
j=1}^s \frac{t-c_j}{\hat c_i-c_j}\right)\left(\prod_{ j=1,j\ne
i}^r \frac{t-\hat c_j}{\hat c_i-\hat c_j}\right)\mathrm{d}t,
\qquad i = 1,\dots,r.
\end{eqnarray}

\begin{rem}\label{c01}
In the case considered in the previous Remark~\ref{c0}, i.e. when
$c_0=0$ is formally added to the abscissae (\ref{ci}), the first product in each
formula in (\ref{betai}) ranges from $j=0$ to $s$. Moreover, also the range
of $\{\beta_i\}$ becomes $i=0,1,\dots,s$. However, for sake of
simplicity, we shall not consider this case further.
\end{rem}

According to \cite{IT2}, the right-hand side of \eqref{discr_lin}
is called \textit{discrete line integral}, while the vectors
\begin{equation}\label{hYi}
\hat Y_i \equiv \sigma(t_0+\hat c_i h), \qquad i=1,\dots,r,
\end{equation}

\no are called \textit{silent stages}: they just serve to
increase, as much as one likes, the degree of precision of the
quadrature formula, but they are not to be regarded as unknowns
since, from \eqref{expan} and (\ref{hYi}), they can be expressed in terms of
linear combinations of the fundamental stages (\ref{Yi}).

\begin{defn}\label{defhbvmks}
The method defined by substituting the quantities in \eqref{expan}
into the right-hand side of \eqref{discr_lin}, and by choosing the
unknown coefficients $\{\gamma_j\}$ in order that the resulting
expression vanishes, is called {\em Hamiltonian Boundary Value
Method with $k$ steps and degree $s$}, in short {\em HBVM($k$,$s$)},
where $k=s+r$ \, \cite{BIT}.\end{defn}

In the sequel, we shall see that HBVMs may be expressed through different,
though equivalent, formulations: some of them can be directly implemented in a
computer program, the others being of more theoretical interest.

Because of the equality \eqref{discr_lin}, we can apply the
procedure directly to the original line integral appearing in the
left-hand side. With this premise, by considering the first expansion in
\eqref{expan},  the conservation property  reads
$$\sum_{j=1}^{s} \gamma_j^T \int_0^1  P_j(\tau) \nabla
H(\sigma(t_0+\tau h))\dd\tau=0,$$

\no which, as is easily checked,  is certainly satisfied if we
impose the following set of orthogonality conditions:
\begin{equation}
\label{orth} \gamma_j = \int_0^1  P_j(\tau) J \nabla
H(\sigma(t_0+\tau h))\dd\tau, \qquad j=1,\dots,s.
\end{equation}

\no Then, from the second relation of \eqref{expan} we obtain, by
introducing the operator
\begin{eqnarray}\label{Lf}\lefteqn{L(f;h)\sigma(t_0+ch) =}\\
\nonumber && \sigma(t_0)+h\sum_{j=1}^s \int_0^c P_j(x) \dd x \,
\int_0^1 P_j(\tau)f(\sigma(t_0+\tau h))\dd\tau,\qquad
c\in[0,1],\end{eqnarray}

\no that $\sigma$ is the eigenfunction of $L(J\nabla H;h)$
relative to the eigenvalue $\lambda=1$:
\begin{equation}\label{L}\sigma = L(J\nabla H;h)\sigma.\end{equation}

\begin{defn} Equation (\ref{L}) is the {\em Master Functional
Equation} (MFE) defining $\sigma$ ~\cite{BIT1}.\end{defn}

\begin{rem}\label{MFE}
Some further details are in order to better elucidate the role of
the MFE in devising our methods. First of all we observe that, by
definition, the MFE intrinsically brings, with its polynomial
solutions $\sigma(t_0+ch)$, the conservation property of the
Hamiltonian function: indeed \eqref{L}  is equivalent to
\eqref{discr_lin} under the choice \eqref{orth}.

This means that, when searching for its solutions, one should always
take care of the precise dimension of the polynomial vector space,
say $\nu$,  $H(y)$ is intended to belong to: the higher is $\nu$,
the higher must be the number of silent stages (and hence the number
of steps $k$) to guarantee that \eqref{discr_lin} be satisfied. This
explains the way the solutions of the MFE depends on $k$.

It is also clear that, assuming the same kind of distribution for
all the $k$ the nodes (see later), \eqref{discr_lin} will be
satisfied starting from a suitable number of steps $k\equiv k_\nu$
on. This implies that, for all $k\ge k_\nu$, HBVM$(k,s)$ will
define the {\em same} polynomial $\sigma$ of degree $s$, such that
$H(\sigma(t_0+h))=H(\sigma(t_0))$.
\end{rem}

To practically compute $\sigma$, we set (see (\ref{Yi}) and
(\ref{expan}))
\begin{equation}
\label{y_i} Y_i=  \sigma(t_0+c_i h) = y_0+ h\sum_{j=1}^{s} a_{ij}
\gamma_j, \qquad i=1,\dots,s,
\end{equation}

\no where
$$a_{ij}=\int_{0}^{c_i} P_j(x) \mathrm{d}x, \qquad
i,j=1,\dots,s.$$%

\no Inserting \eqref{orth} into \eqref{y_i} yields the final
formulae which define the HBVMs class based upon the orthonormal
basis $\{P_j\}$:
\begin{equation}
\label{hbvm_int} Y_i=y_0+h  \sum_{j=1}^s a_{ij}\int_0^1
P_j(\tau)  J \nabla H(\sigma(t_0+\tau h))\,\dd\tau, \qquad
i=1,\dots,s.
\end{equation}

We recall once again that we are working under the assumption
\eqref{discr_lin}, namely that the Hamiltonian is a polynomial and
that we are considering a sufficient number of additional
abscissae $\hat c_i$ such that the line integral and its discrete
counterpart do coincide. This implies that  we can replace the
integrals appearing in \eqref{hbvm_int} by sums representing the
associated quadrature formulae  introduced in \eqref{discr_lin},
without introducing any discretization error.

This leads back to express the HBVM$(k,s)$ method in terms of the
fundamental stages $\{Y_i\}$ and the silent stages $\{\hat Y_i\}$
(see (\ref{hYi})). By using the notation
\begin{equation}\label{fy}
f(y) = J \nabla H(y),
\end{equation}

\no we obtain
\begin{equation}
\label{hbvm_sys} Y_i = y_0+h\sum_{j=1}^s a_{ij}\left( \sum_{l=1}^s
\beta_l P_j(c_l)f(Y_l) + \sum_{l=1}^r\hat \beta_l P_j(\hat c_l)
f(\widehat Y_l) \right),\quad i=1,\dots,s.
\end{equation}

\no We again stress that the silent stages $\hat Y_l$ may be
removed from \eqref{hbvm_sys} by observing that, for example,
$$
\hat Y_l = \sum_{i=1}^s \ell(t_0+\hat c_ih) Y_i, \qquad
l=1,\dots,r,
$$

\no where $\ell(t)$ are the cardinal Lagrange polynomials defined
on the nodes $t_0+c_ih$, $i=1,\dots,s$.

From the above discussion it is clear that formulae \eqref{hbvm_int}
also make sense in the non-polynomial case. In fact, supposing to
choose the abscissae $\{\hat c_i\}$ so that the sums in
(\ref{hbvm_sys}) converge to an integral as $r\equiv
k-s\rightarrow\infty$, the resulting formula is again
\eqref{hbvm_int}.

\begin{defn}\label{infh} Formula (\ref{hbvm_int}) is named {\em
$\infty$-HBVM of degree $s$} or {\em HBVM$(\infty,s)$} \, \cite{BIT1}.
\end{defn}

This implies that HBVMs may be as well applied in the non-polynomial case since,
in finite precision arithmetic, HBVMs are undistinguishable from their limit
formulae \eqref{hbvm_int}, when a sufficient number of silent stages is
introduced. The aspect of having a {\em practical} exact integral,
for $k$ large enough, was already stressed in
\cite{BIS,BIT,BIT1,IP1,IT2}.

\subsection{Runge-Kutta formulation of HBVMs} On the other hand,
we emphasize that, in the non-polynomial case, \eqref{hbvm_int}
becomes an {\em operative method} only after that a suitable
strategy to approximate the integrals appearing in it is taken
into account. In the present case, if one discretizes the {\em
Master Functional Equation} (\ref{Lf})--(\ref{L}), HBVM$(k,s)$ are
then obtained, essentially by extending the discrete problem
(\ref{hbvm_sys}) also to the silent stages (\ref{hYi}). In order
to simplify the exposition, we shall use (\ref{fy}) and introduce
the following notation:
\begin{eqnarray*}
\{t_i\} = \{c_i\} \cup \{\hat{c}_i\}, &&
\{\omega_i\}=\{\beta_i\}\cup\{\hat\beta_i\},\\[2mm]
y_i = \sigma(t_0+t_ih), && f_i =
f(\sigma(t_0+t_ih)), \qquad i=1,\dots,k.
\end{eqnarray*}

\no The discrete problem defining the HBVM$(k,s)$ then becomes,
\begin{equation}\label{hbvmks}
y_i = y_0 + h\sum_{j=1}^s \int_0^{t_i} P_j(x)\dd x \sum_{\ell=1}^k
\omega_\ell P_j(t_\ell)f_\ell, \qquad i=1,\dots,k.
\end{equation}

By introducing the vectors $$\bfy = (y_1^T,\dots,y_k^T)^T, \qquad
e=(1,\dots,1)^T\in\RR^k,$$ and the matrices
\begin{equation}\label{OIP}\O=\diag(\omega_1,\dots,\omega_k), \qquad
\I_s,~\P_s\in\RR^{k\times s},\end{equation} whose $(i,j)$th entry
are given by
\begin{equation}\label{IDPO}
(\I_s)_{ij} = \int_0^{t_i} P_j(x)\mathrm{d}x, \qquad
(\P_s)_{ij}=P_j(t_i), \end{equation}

\no we can cast the set of equations (\ref{hbvmks}) in vector form
as $$\bfy = e\otimes y_0 + h(\I_s
\P_s^T\O)\otimes I_{2m}\, f(\bfy),$$

\no with an obvious meaning of $f(\bfy)$. Consequently, the method
can be seen as a Runge-Kutta method with the following Butcher
tableau:
\begin{equation}\label{rk}
\begin{array}{c|c}\begin{array}{c} t_1\\ \vdots\\ t_k\end{array} & \I_s \P_s^T\O\\
 \hline                    &\omega_1\, \dots~ \omega_k
                    \end{array}\end{equation}

\begin{rem}\label{ascisse} We observe that, because of the use of an
orthonormal basis, the role of the abscissae $\{c_i\}$ and of the
silent abscissae $\{\hat c_i\}$ is interchangeable, within the set
$\{t_i\}$. This is due to the fact that all the matrices $\I_s$,
$\P_s$, and $\O$ depend on all the abscissae $\{t_i\}$, and not on
a subset of them, and they are invariant with respect to the choice of the
fundamental abscissae $\{c_i\}$.
\end{rem}

Hereafter, we shall consider a Gauss distribution of the abscissae
$\{t_1,\dots,t_k\}$, so that the resulting  HBVM$(k,s)$ method
\cite{BIT1}:

\begin{itemize}

\item has order $2s$ for all $k\ge s$;

\item is symmetric and perfectly $A$-stable (i.e., its
stability region coincides with the left-half complex plane,
$\CC^-$ \cite{BT});

\item reduces to the Gauss-Legendre method of order $2s$, when
$k=s$;

\item exactly preserves polynomial Hamiltonian functions of degree $\nu$,
provided that $$k\ge \frac{\nu s}2.$$

\end{itemize}

\section{The Isospectral Property}\label{iso}

We are now going to prove a further additional result, related to
the matrix appearing in the Butcher tableau (\ref{rk}),
corresponding to HBVM$(k,s)$, i.e., the matrix
\begin{equation}\label{AMAT}A = \I_s \P_s^T\O\in\RR^{k\times
k}, \qquad k\ge s,\end{equation}

\no whose rank is $s$. Consequently it has a $(k-s)$-fold zero
eigenvalue. In this section, we are going to discuss the location
of the remaining $s$ eigenvalues of that matrix.

Before that, we state the following preliminary result, whose
proof can be found in \cite[page\,79]{HW}.

\begin{lem}\label{gauss} The eigenvalues of the matrix
\begin{equation}\label{Xs}
X_s = \pmatrix{cccc}
\frac{1}2 & -\xi_1 &&\\
\xi_1     &0      &\ddots&\\
          &\ddots &\ddots    &-\xi_{s-1}\\
          &       &\xi_{s-1} &0\\
\endpmatrix, \end{equation} with
\begin{equation}\label{xij}\xi_j=\frac{1}{2\sqrt{(2j+1)(2j-1)}}, \qquad
j\ge1,\end{equation} coincide with those of the matrix in the
Butcher tableau of the Gauss-Legendre method of order
$2s$.\end{lem}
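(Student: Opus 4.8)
The plan is to exhibit an explicit similarity taking the Gauss--Legendre Butcher matrix onto $X_s$; since similar matrices have the same spectrum, this is all that is required. Using the equivalence recalled above between HBVM$(s,s)$ and the Gauss--Legendre method of order $2s$, the matrix whose eigenvalues we must locate is $A=\I_s\P_s^T\O$ in the case $k=s$, where now $\I_s,\P_s,\O\in\RR^{s\times s}$ are built on the $s$ Gauss--Legendre nodes $c_1,\dots,c_s$ (the zeros of the shifted Legendre polynomial $\hat P_s$) with weights $\omega_i=\beta_i$; here $\P_s=\big(P_j(c_i)\big)_{ij}$ is precisely the classical $W$-transformation matrix. (Alternatively one may work directly with the standard collocation matrix $\mathcal A=\big(\int_0^{c_i}\ell_j(x)\,\dd x\big)_{ij}$, with $\ell_j$ the cardinal Lagrange polynomials on $\{c_i\}$; the computation is the same.)

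First I would record that, because the $s$-point Gauss formula is exact on polynomials of degree $2s-1$ and the $P_j$ are orthonormal, $\P_s^T\O\P_s=\big(\sum_\ell\beta_\ell P_i(c_\ell)P_j(c_\ell)\big)_{ij}=\big(\int_0^1 P_i(t)P_j(t)\,\dd t\big)_{ij}=I_s$; in particular $\P_s$ is nonsingular and $\P_s^{-1}=\P_s^T\O$. (For the $\mathcal A$-variant one also notes that, since $\deg P_j\le s-1$, the degree-$(s-1)$ interpolant of the data $\{P_j(c_k)\}_k$ is $P_j$ itself, so $\sum_k P_j(c_k)\ell_k=P_j$ and hence $\mathcal A\P_s=\I_s$.)

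The computational heart is the polynomial identity
$$\int_0^t P_j(x)\,\dd x=\xi_j\,P_{j+1}(t)-\xi_{j-1}\,P_{j-1}(t)\quad(2\le j\le s),\qquad \int_0^t P_1(x)\,\dd x=\tfrac12 P_1(t)+\xi_1 P_2(t),$$
with $\xi_j$ as in \eqref{xij}. This follows from the classical relation $\hat P_{n+1}'-\hat P_{n-1}'=2(2n+1)\hat P_n$ for shifted Legendre polynomials (which can also be extracted from the three-term recurrence of Remark~\ref{recur}), integrated from $0$ and using $\hat P_{n+1}(0)=\hat P_{n-1}(0)$, together with the normalization $P_j=\sqrt{2j-1}\,\hat P_{j-1}$. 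Read columnwise at the nodes $c_i$, these identities say $(\I_s)_{ij}=(\P_s X_s)_{ij}$ for $j\le s-1$, while for $j=s$ they give $(\I_s)_{is}=(\P_s X_s)_{is}+\xi_s P_{s+1}(c_i)$; but $P_{s+1}=\sqrt{2s+1}\,\hat P_s$ vanishes at every Gauss node, so this last term disappears and we obtain $\I_s=\P_s X_s$.

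Combining the two ingredients, $A=\I_s\P_s^T\O=\P_s X_s\,(\P_s^T\O)=\P_s X_s\P_s^{-1}$ (respectively $\mathcal A=\I_s\P_s^{-1}=\P_s X_s\P_s^{-1}$), so the Gauss--Legendre Butcher matrix is similar to $X_s$ and the two have the same eigenvalues, as claimed. The only step that is not pure bookkeeping is the integrated Legendre identity of the third paragraph; it is elementary but is the place where the specific values $\xi_j$ enter, and it is essentially the content of the $W$-transformation computation in \cite{HW}, which is why the lemma may simply be quoted from there.
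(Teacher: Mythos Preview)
Your argument is correct and is precisely the $W$-transformation computation that the paper invokes: the paper does not actually prove this lemma but states that its proof ``can be found in \cite[page 79]{HW},'' and what you have written is exactly that argument, carried out in the paper's notation (your key steps $\P_s^T\O\P_s=I_s$ and $\I_s=\P_s X_s$ at the Gauss nodes are the standard ingredients there, and they reappear in the paper itself as Lemma~\ref{intleg} and in the proof of Theorem~\ref{mainres}). So your proof matches the source the paper defers to.
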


\medskip
We also need the following preliminary result, whose proof derives
from the properties of shifted-Legendre polynomials (see, e.g.,
\cite{AS} or the Appendix in \cite{BIT}).

\begin{lem}\label{intleg} With reference to the matrices in
(\ref{OIP})--(\ref{IDPO}), one has
$$\I_s = \P_{s+1}\hat{X}_s,$$

\no where
$$\hat{X}_s = \pmatrix{cccc}
\frac{1}2 & -\xi_1 &&\\
\xi_1     &0      &\ddots&\\
          &\ddots &\ddots    &-\xi_{s-1}\\
          &       &\xi_{s-1} &0\\
\hline &&&\xi_s\endpmatrix,$$

\no with the $\xi_j$ defined by (\ref{xij}). \end{lem}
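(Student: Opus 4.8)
The plan is to prove the identity $\I_s = \P_{s+1}\hat X_s$ by comparing the two sides column-by-column, i.e. by checking that for each $j=1,\dots,s$ the function $\int_0^t P_j(x)\,\dd x$, evaluated at the nodes $t_1,\dots,t_k$, equals the corresponding linear combination of $P_1(t_i),\dots,P_{s+1}(t_i)$ dictated by the $j$th column of $\hat X_s$. Since both sides are values at the $k\ge s+1$ abscissae of polynomials of degree at most $s$, and $\P_{s+1}$ has full column rank $s+1$, it suffices to establish the polynomial identity
\begin{equation*}
\int_0^t P_j(x)\,\dd x = \sum_{i=1}^{s+1} (\hat X_s)_{ij}\, P_i(t), \qquad j=1,\dots,s,
\end{equation*}
on the whole interval $[0,1]$; the evaluation at $\{t_\ell\}$ then follows automatically. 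So the crux is an antiderivative formula for the orthonormal shifted Legendre polynomials.

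Next I would derive that antiderivative formula from the standard three-term recurrence in Remark~\ref{recur} (equivalently, from the classical Legendre relations in \cite{AS}). Writing the recurrence in the symmetric form $(2t-1)P_{j}(t) = \xi_{j}\,P_{j+1}(t) + \xi_{j-1}\,P_{j-1}(t)$ with $\xi_j$ as in \eqref{xij} — and noting $P_1\equiv 1$ — one integrates. The key classical fact is the derivative relation $P_{j+1}'(t) - P_{j-1}'(t) = (2j+1)\cdot(\text{const})\,P_j(t)$ for shifted Legendre polynomials; normalized to our orthonormal basis this reads
\begin{equation*}
\frac{\dd}{\dd t}\Bigl(\xi_j P_{j+1}(t) - \xi_{j-1} P_{j-1}(t)\Bigr) = P_j(t), \qquad j\ge 1,
\end{equation*}
with the convention $P_0\equiv 0$. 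Integrating from $0$ to $t$ and using $P_i(0)=\sqrt{2i-1}\,(-1)^{i-1}$ to fix the constants gives exactly
\begin{equation*}
\int_0^t P_j(x)\,\dd x = \xi_j P_{j+1}(t) - \xi_{j-1} P_{j-1}(t) + \bigl(\text{constant}\bigr),
\end{equation*}
and the constant is absorbed into the $P_1$ term (the top row $\tfrac12$ for $j=1$, and the off-diagonal $-\xi_{j-1}$, diagonal $0$, subdiagonal $\xi_j$ pattern for $j\ge 2$, with the extra bottom row $\xi_s$ coming from the $j=s$ column feeding $P_{s+1}$). Matching these coefficients against the entries of $\hat X_s$ is then a direct inspection.

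The main obstacle is purely bookkeeping: getting the constant of integration right so that it lands on $P_1$ with coefficient exactly $\tfrac12$ for $j=1$ and $0$ for $j\ge 2$, and making sure the indexing of $\xi_j$ (and the appearance of the extra row $\xi_s$, which is precisely why the matrix on the right is $(s+1)\times s$ while $X_s$ in Lemma~\ref{gauss} is $s\times s$) is consistent. One clean way to pin down the constants without computing $P_i(0)$ is to use orthogonality directly: for $j\ge 1$, $\int_0^1\bigl(\int_0^t P_j\bigr)P_i(t)\,\dd t$ can be evaluated by integration by parts, which immediately yields that $\int_0^t P_j(x)\,\dd x$ has nonzero components only along $P_{j-1}, P_j, P_{j+1}$ (with the $P_j$-component vanishing by the symmetry $P_j(1-t)=(-1)^{j-1}P_j(t)$), and the two surviving components are exactly $\mp\xi_{j-1}$ and $\xi_j$; the $P_1$-component is handled separately by evaluating $\int_0^1\int_0^t P_j = \int_0^1 (1-x)P_j(x)\,\dd x$, which is $\tfrac12$ for $j=1$ and $0$ otherwise. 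This makes the identification of every entry of $\hat X_s$ transparent and completes the proof.
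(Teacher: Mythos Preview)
Your approach is correct and coincides with what the paper alludes to (it gives no proof beyond a reference to ``the properties of shifted-Legendre polynomials''): the key identity is precisely $\int_0^t P_j\,\dd x = \xi_j P_{j+1}(t) - \xi_{j-1}P_{j-1}(t)$ for $j\ge2$ together with $\int_0^t P_1\,\dd x = \tfrac12 P_1(t)+\xi_1 P_2(t)$, after which the matrix identity follows by evaluating at the $t_i$. Two small corrections are in order: the assumption $k\ge s+1$ is neither needed nor always available (the polynomial identity yields the matrix identity by pointwise evaluation for every $k$, and the paper uses the Lemma also when $k=s$), and in your orthogonality-based alternative the $P_1$-component $\int_0^1(1-x)P_j(x)\,\dd x$ equals $-\xi_1$ when $j=2$ rather than $0$, which is exactly consistent with the $(1,2)$ entry $-\xi_1$ of $\hat X_s$.
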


\medskip
The following result then holds true.

\begin{theo}[Isospectral Property of HBVMs]\label{mainres}
For all $k\ge s$ and for any choice of the abscissae $\{t_i\}$
such that the quadrature defined by the weights $\{\omega_i\}$ is exact for
polynomials of degree $2s-1$, the nonzero eigenvalues of the
matrix $A$ in (\ref{AMAT}) coincide with those of the matrix of
the Gauss-Legendre method of order $2s$.
\end{theo}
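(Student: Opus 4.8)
The plan is to reduce the spectrum of the $k\times k$ matrix $A=\I_s\P_s^T\O$ to the spectrum of a small $s\times s$ matrix and then to identify that $s\times s$ matrix — up to a zero row/column — with the matrix $X_s$ of Lemma~\ref{gauss}. First I would use the elementary fact that, for rectangular matrices $U\in\RR^{k\times s}$ and $V\in\RR^{s\times k}$, the products $UV\in\RR^{k\times k}$ and $VU\in\RR^{s\times s}$ have the same nonzero eigenvalues (with multiplicities). Writing $A=\I_s(\P_s^T\O)$ with $U=\I_s$ and $V=\P_s^T\O$, this immediately tells us that the $s$ nonzero eigenvalues of $A$ are exactly the eigenvalues of the $s\times s$ matrix $\P_s^T\O\,\I_s$. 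This step also transparently explains the $(k-s)$-fold zero eigenvalue and the fact that the answer cannot depend on $k$ beyond this reduction.

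Next I would compute $\P_s^T\O\,\I_s$ explicitly using the hypothesis and Lemma~\ref{intleg}. Substituting $\I_s=\P_{s+1}\hat X_s$ gives $\P_s^T\O\,\I_s=(\P_s^T\O\,\P_{s+1})\hat X_s$. The key observation is that the $(i,j)$ entry of $\P_s^T\O\,\P_{s+1}$ is $\sum_{\ell=1}^k\omega_\ell P_i(t_\ell)P_j(t_\ell)$, which is precisely the quadrature formula with nodes $\{t_\ell\}$ and weights $\{\omega_\ell\}$ applied to the polynomial $P_i(t)P_j(t)$. Since $P_i$ has degree $i-1\le s-1$ and $P_j$ has degree $j-1\le s$, the product has degree at most $2s-1$, so by the exactness hypothesis the quadrature returns the exact integral $\int_0^1 P_i(t)P_j(t)\,\dd t$. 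By the orthonormality relation (\ref{orto}), this integral is $\delta_{ij}$ for $i,j\le s$, and for $j=s+1$ it is $\int_0^1 P_i(t)P_{s+1}(t)\,\dd t=0$ since $i\le s$. Hence $\P_s^T\O\,\P_{s+1}=[\,I_s\mid \bfo\,]\in\RR^{s\times(s+1)}$, the $s\times s$ identity with a zero column appended.

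Therefore $\P_s^T\O\,\I_s=[\,I_s\mid\bfo\,]\hat X_s$, and from the block form of $\hat X_s$ in Lemma~\ref{intleg} — whose first $s$ rows are exactly $X_s$ and whose last row is $(0,\dots,0,\xi_s)$ — the product $[\,I_s\mid\bfo\,]\hat X_s$ simply selects the first $s$ rows of $\hat X_s$, i.e.\ it equals $X_s$. Thus the nonzero eigenvalues of $A$ coincide with the eigenvalues of $X_s$, which by Lemma~\ref{gauss} are exactly the eigenvalues of the matrix of the Gauss-Legendre method of order $2s$, completing the proof.

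I do not expect a serious obstacle here; the argument is essentially a bookkeeping chain of three facts (the $UV$/$VU$ spectral identity, the exactness-plus-orthonormality collapse of $\P_s^T\O\,\P_{s+1}$, and the block structure of $\hat X_s$). The one point that requires a little care is the degree count showing that $P_iP_j$ stays within the exactness range: it works precisely because $\I_s$ was expanded in the basis up to $P_{s+1}$, so the relevant products have degree at most $(s-1)+s=2s-1$, matching the hypothesis exactly — this is the spot where the stated assumption on the quadrature is used in full strength and where an off-by-one error would be easy to make.
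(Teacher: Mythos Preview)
Your proof is correct and rests on the same two ingredients as the paper's --- the identity $\P_s^T\O\,\P_{s+1}=(\,I_s~\bfo\,)$ obtained from exactness and orthonormality, and Lemma~\ref{intleg} --- but you close the argument differently. The paper works on the ``large'' side: it computes
\[
A\,\P_{s+1} \;=\; \I_s\,\P_s^T\O\,\P_{s+1} \;=\; \I_s\,(I_s~\bfo) \;=\; \P_{s+1}\hat X_s\,(I_s~\bfo) \;=\; \P_{s+1}\,(\hat X_s~\bfo),
\]
so that the columns of $\P_{s+1}$ span an $A$-invariant subspace, and then reads off the eigenvalues of the $(s{+}1)\times(s{+}1)$ block $(\hat X_s~\bfo)$, namely those of $X_s$ together with a zero. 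You instead work on the ``small'' side via the $UV/VU$ spectral identity, directly obtaining the $s\times s$ matrix $\P_s^T\O\,\I_s=[\,I_s\mid\bfo\,]\hat X_s=X_s$. Your route is a touch more economical: it avoids the auxiliary $(s{+}1)$th column and the appeal to invariant subspaces, and it delivers the multiplicity statement for free. The paper's route, on the other hand, exhibits an explicit (right) invariant subspace of $A$, which is occasionally useful in its own right. Either way, the substantive computation --- and the place where the $2s{-}1$ exactness hypothesis is used at full strength --- is identical.
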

\begin{proof}
For $k=s$, the abscissae $\{t_i\}$ have to be the $s$
Gauss-Legendre nodes, so that HBVM$(s,s)$ reduces to the Gauss
Legendre method of order $2s$, as outlined at the end of Section~\ref{hbvms}.

When $k>s$, from the orthonormality of the basis, see
(\ref{orto}), and considering that the quadrature with weights
$\{\omega_i\}$ is exact for polynomials of degree (at least)
$2s-1$, one easily obtains that (see (\ref{OIP})--(\ref{IDPO}))
$$\P_s^T\O\P_{s+1} = \left( I_s ~ \bfo\right),$$

\no since, for all ~$i=1,\dots,s$,~ and ~$j=1,\dots,s+1$:
$$\left(\P_s^T\O\P_{s+1}\right)_{ij} = \sum_{\ell=1}^k \omega_\ell
P_i(t_\ell)P_j(t_\ell)=\int_0^1 P_i(t)P_j(t)\dd t = \delta_{ij}.$$

\no By taking into account the result of Lemma~\ref{intleg}, one
then obtains:
\begin{eqnarray*}
A\P_{s+1} &=& \I_s \P_s^T\O\P_{s+1} = \I_s \left(I_s~\bfo\right)
=\P_{s+1}
\hat{X}_s \left(I_s~\bfo\right) = \P_{s+1}\left(\hat{X}_s~\bfo\right)\\
 &\equiv& \P_{s+1}
\pmatrix{cccc|c}
\frac{1}2 & -\xi_1 && &0\\
\xi_1     &0      &\ddots& &\vdots\\
          &\ddots &\ddots    &-\xi_{s-1}&\vdots\\
          &       &\xi_{s-1} &0&0\\
\hline &&&\xi_s&0\endpmatrix\end{eqnarray*}

\no with the $\{\xi_j\}$ defined according to (\ref{xij}).
Consequently, one obtains that the columns of $\P_{s+1}$
constitute a basis of an invariant (right) subspace of matrix $A$,
so that the eigenvalues of $(\hat X_s~\bfo)$ are eigenvalues of
$A$. In more detail, the eigenvalues of $(\hat X_s~\bfo)$ are
those of $X_s$ (see (\ref{Xs})) and the zero eigenvalue. Then,
also in this case, the nonzero eigenvalues of $A$ coincide with
those of $X_s$, i.e., with the eigenvalues of the matrix defining
the Gauss-Legendre method of order $2s$.\QED\end{proof}

\section{Solving the discrete problem}\label{blend}

We shall now consider some computational aspects concerning
HBVM$(k,s)$. In more details, we now show how its cost depends
essentially on $s$, rather than on $k$, in the sense that the
nonlinear system to be solved, for obtaining the discrete solution,
has (block) dimension $s$. This has been already shown in
\cite{BIT}, but here we derive the same result in a slightly more
compact way, which will allow us to easily introduce {\em blended
HBVMs} in the next section.

In order to simplify the notation, we shall fix the fundamental
stages at $t_1,\dots,t_s$, since we have already seen that, due to
the use of an orthonormal basis , they could be in principle chosen
arbitrarily, among the abscissae $\{t_i\}$. With this premise, we
have, from (\ref{hbvm_int}),
\begin{equation}\label{ys}
y_i = y_0 + h\sum_{j=1}^s a_{ij}  \sum_{\ell = 1}^k\omega_\ell
P_j(t_\ell)f_\ell, \qquad i = 1,\dots,s.\end{equation}

This equation is now coupled with that defining the silent stages,
i.e., from (\ref{expan}) and (\ref{hYi}),
\begin{equation}\label{hy}
y_i = y_0 + \sum_{j=1}^s \cc_j \int_0^{t_i}P_j(t) \dd t, \qquad i
= s+1,\dots,k.
\end{equation}

Let us now partition the matrices $\I_s,\P_s\in\RR^{k\times s}$ in
(\ref{OIP})--(\ref{IDPO}) into $\I_{s1},\P_{s1}\in\RR^{s\times s}$ and
$\I_{s2},\P_{s2}\in\RR^{k-s\times s}$, containing the entries
defined by the fundamental abscissae and the silent abscissae,
respectively. Similarly, we partition the vector $\bfy$ into
$\bfy_1$, containing the fundamental stages, and $\bfy_2$
containing the silent stages and, accordingly, let
$\O_1\in\RR^{s\times s}$ and $\O_2\in\RR^{k-s\times k-s}$ be the
diagonal matrices containing the corresponding entries in matrix
$\O$. Finally, let us define the vectors $\bfgam =
(\cc_1^T,\dots,\cc_s^T)^T$, $e=(1,\dots,1)^T\in\RR^s$, and $u =
(1,\dots,1)^T\in\RR^{k-s}$.

\no Consequently, we can rewrite (\ref{ys}) and (\ref{hy}), as
\begin{eqnarray}\label{y1}
\bfy_1 &=& e\otimes y_0 + h\I_{s1} \left( \P_{s1}^T~
\P_{s2}^T\right) \pmatrix{cc}\O_1 \\ &\O_2\endpmatrix\otimes
I_{2m}\pmatrix{c} f(\bfy_1)\\
f(\bfy_2)\endpmatrix,\\ \bfy_2 &=& u\otimes y_0 +h \I_{s2}\otimes
I_{2m} \bfgam, \label{y2}\end{eqnarray}

\no respectively. The vector $\bfgam$ can be easily retrieved from the
identity (\ref{y_i}), which in vector form reads
$$\bfy_1 = e\otimes y_0 + h\I_{s1}\otimes I_{2m} \bfgam,$$

\no thus giving \begin{eqnarray}\nonumber \bfy_2 &=&
\left(u-\I_{s2}\I_{s1}^{-1}e\right)\otimes y_0
+\I_{s2}\I_{s1}^{-1}\otimes I_{2m} \bfy_1\\ &\equiv& \hat u\otimes
y_0 +A_1\otimes I_{2m}\bfy_1,\label{A1}\end{eqnarray}

\no in place of (\ref{y2}), where, evidently,
$A_1\in\RR^{k-s\times s}$. By setting
\begin{equation}\label{B1B2}
B_1 = \I_{s1} \P_{s1}^T\O_1 \in\RR^{s\times s}, \qquad B_2 = \I_{s1}
\P_{s2}^T \O_2\in\RR^{s\times k-s},\end{equation}

\no substitution of (\ref{A1}) into (\ref{y1}) then provides, at
last, the system of (block) size $s$ to be actually solved:
\begin{eqnarray}\label{onlys}
F(\bfy_1) &\equiv& \bfy_1 - e\otimes y_0 - h\left[ B_1 \otimes
I_{2m} f(\bfy_1) + \right.\\ &&\left. B_2 \otimes I_{2m}
f\left(\hat u\otimes y_0 +A_1\otimes I_{2m} \bfy_1\right)\right] =
\bf0.\nonumber\end{eqnarray}

By using the simplified Newton method for solving (\ref{onlys}),
and setting
\begin{equation}\label{C} C=B_1+B_2A_1 \in\RR^{s\times s},\end{equation}

\no one obtains the iteration:
\begin{eqnarray}\label{Newt}
 \left( I_s\otimes I_{2m} - hC\otimes J_0 \right) \bfdel^{(n)} &=&
-F(\bfy_1^{(n)}) \equiv \bfpsi_1^{(n)},\\ \bfy_1^{(n+1)} &=&
\bfy_1^{(n)} + \bfdel^{(n)}, \qquad n=0,1,\dots, \nonumber
\end{eqnarray}

\no where $J_0$ is the Jacobian of $f(y)$ evaluated at $y_0$.
Because of the result of Theorem~\ref{mainres}, the following
property of matrix $C$ holds true.

\begin{theo}\label{isoC}
The eigenvalues of matrix $C$ in (\ref{C}) coincide with those of
matrix (\ref{Xs}), i.e., with the eigenvalues of the matrix of the
Butcher array of the Gauss-Legendre method of order $2s$.
\end{theo}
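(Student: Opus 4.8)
The plan is to show that $C$ is \emph{similar} to the $s\times s$ matrix $\P_s^T\O\I_s$, and then to recognize the latter as exactly the matrix $X_s$ of (\ref{Xs}), reusing Lemmas~\ref{gauss} and~\ref{intleg} together with the computation already carried out in the proof of Theorem~\ref{mainres}.

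First I would rewrite $C$. Recalling (\ref{C}), (\ref{B1B2}), and that $A_1=\I_{s2}\I_{s1}^{-1}$ by (\ref{A1}), one has $C = \I_{s1}\P_{s1}^T\O_1 + \I_{s1}\P_{s2}^T\O_2\I_{s2}\I_{s1}^{-1}$. Since $\I_{s1}$ is invertible — a fact already used in Section~\ref{blend} in passing from (\ref{y2}) to (\ref{A1}) and in forming (\ref{onlys}) — the first summand equals $\I_{s1}\bigl(\P_{s1}^T\O_1\I_{s1}\bigr)\I_{s1}^{-1}$, so that
$$C = \I_{s1}\Bigl(\P_{s1}^T\O_1\I_{s1} + \P_{s2}^T\O_2\I_{s2}\Bigr)\I_{s1}^{-1}.$$
Because the fundamental/silent partition of $\I_s,\P_s,\O$ merely splits the summation range $\ell=1,\dots,k$ into $\ell=1,\dots,s$ and $\ell=s+1,\dots,k$, one has $\P_{s1}^T\O_1\I_{s1}+\P_{s2}^T\O_2\I_{s2}=\P_s^T\O\I_s$, whence $C=\I_{s1}\,(\P_s^T\O\I_s)\,\I_{s1}^{-1}$. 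Thus $C$ and $\P_s^T\O\I_s$ share the same eigenvalues.

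It then remains to identify $\P_s^T\O\I_s$. By Lemma~\ref{intleg}, $\I_s=\P_{s+1}\hat X_s$, hence $\P_s^T\O\I_s=(\P_s^T\O\P_{s+1})\hat X_s$; and in the proof of Theorem~\ref{mainres} it was shown — using the orthonormality (\ref{orto}) and the exactness of the quadrature $\{\omega_i\}$ on polynomials of degree $2s-1$ — that $\P_s^T\O\P_{s+1}=(I_s~\bfo)$. Therefore $\P_s^T\O\I_s=(I_s~\bfo)\hat X_s=X_s$, since left multiplication by $(I_s~\bfo)$ merely deletes the last row $(0,\dots,0,\xi_s)$ of $\hat X_s$. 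Combining this with the previous step, $C$ is similar to $X_s$, whose eigenvalues coincide, by Lemma~\ref{gauss}, with those of the matrix in the Butcher array of the Gauss-Legendre method of order $2s$.

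I do not expect any serious obstacle: the proof is a short chain of matrix identities. The only points needing a little care are the invertibility of $\I_{s1}$ (already implicit in the construction of (\ref{onlys})) and the bookkeeping observation that reassembling the fundamental and silent blocks of $\P^T\O\I$ restores the full $k$-term sum $\P_s^T\O\I_s$; everything else is a direct appeal to the two lemmas and to the identity $\P_s^T\O\P_{s+1}=(I_s~\bfo)$ already established for Theorem~\ref{mainres}.
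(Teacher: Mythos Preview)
Your argument is correct, and it actually proves a sharper statement than the paper does: you show directly that $C$ is \emph{similar} to $X_s$ (via $\I_{s1}$), whereas the paper only establishes that $\sigma(C)=\sigma(X_s)$. The paper's route is different: it rewrites the full Runge--Kutta system as a block system, deduces that
\[
\pmatrix{cc} I_s & O\\ -A_1 & I_r\endpmatrix A \;=\; \pmatrix{cc} B_1 & B_2\\ O & O\endpmatrix,
\]
so that the eigenvalues of $A$ are the generalized eigenvalues of this pencil, and then argues that the nonzero ones are precisely the eigenvalues of $C=B_1+B_2A_1$; the conclusion then follows from Theorem~\ref{mainres}. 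Your approach bypasses the pencil entirely: factoring out $\I_{s1}$ and reassembling the fundamental/silent blocks into $\P_s^T\O\I_s$ leads directly to $X_s$ via Lemma~\ref{intleg} and the identity $\P_s^T\O\P_{s+1}=(I_s~\bfo)$. The paper's argument has the conceptual benefit of explaining \emph{why} $C$ inherits the nonzero spectrum of $A$ (it is the Schur-type reduction of the block BVM form), while yours is shorter, gives similarity rather than merely isospectrality, and uses Theorem~\ref{mainres} only for the single quadrature identity, not for its full conclusion.
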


\proof Assuming, as usual for simplicity, that the fundamental
stages are the first $s$ ones, one has that the discrete problem
$$\bfy = \pmatrix{c}e\\ u\endpmatrix \otimes y_0 +h A\otimes
I_{2m} f(\bfy),$$

\no which defines the Runge-Kutta formulation of the method, is
equivalent, by virtue of (\ref{y1}), (\ref{A1}), (\ref{B1B2}), to
\begin{eqnarray*}\lefteqn{\pmatrix{cc} I_s & O_{s\times r}\\ -A_1 & I_r
\endpmatrix\otimes
I_{2m} \pmatrix{c} \bfy_1\\ \bfy_2\endpmatrix =}\\&& \pmatrix{c} e\\
\hat u\endpmatrix\otimes y_0 +h\pmatrix{cc} B_1 & B_2\\
O_{r\times s} & O_{r\times r} \endpmatrix\otimes I_{2m} \pmatrix{c} f(\bfy_1)\\
f(\bfy_2)\endpmatrix,\end{eqnarray*}

\no where, as usual, $r=k-s$.\footnote{As observed in \cite{IP2,IT3}, such
formulation fits the framework of block BVMs.}
Consequently, the eigenvalues of the matrix $A$ defined in (\ref{AMAT})
coincide with those of the matrix pencil
$$\left(~\pmatrix{cc} I_s &O_{s\times r}\\ -A_1 & I_r\endpmatrix,~
\pmatrix{cc} B_1 &B_2\\ O_{r\times s} & O_{r\times
r}\endpmatrix~\right),$$

\no that is
$$\mu\in\sigma(A) ~~\Leftrightarrow~~ \mu\pmatrix{cc} I_s &O_{s\times r}\\
-A_1 & I_r\endpmatrix\pmatrix{c} \bfu\\ \bfv\endpmatrix =\pmatrix{cc} B_1 &B_2\\
O_{r\times s} & O_{r\times r}\endpmatrix\pmatrix{c} \bfu\\
\bfv\endpmatrix,$$

\no for some nonzero vector $(\bfu^T,\bfv^T)^T$. By setting
$\bfu=\bfo$, one obtains the $r$ zero eigenvalues of the pencil.
For the remaining $s$ (nonzero) ones, it must be $\bfv=A_1\bfu$,
so that:
$$\mu \bfu = \left( B_1\bfu + B_2\bfv \right) = \left( B_1\bfu + B_2A_1\bfu
\right) = C\bfu ~~\Leftrightarrow~~ \mu\in\sigma(C).\QED$$

\medskip
\begin{rem}\label{algo}
From the result of Theorem~\ref{isoC}, it follows that the
spectrum of $C$ doesn't depend on the choice of the $s$
fundamental abscissae, within the nodes $\{t_i\}$. On the
contrary, its condition number does: the latter appears to be
minimized when the fundamental abscissae are symmetrically
distributed, and approximately evenly spaced, in the interval
$[0,1]$. As a practical ``{\em rule of thumb}'', the following
algorithm appears to be almost optimal:\footnote{We plan to investigate this aspect
further, in a forthcoming paper.}
\begin{enumerate}
\item let the $k$ abscissae $\{t_i\}$ be chosen according to a
Gauss-Legendre distribution of $k$ nodes;

\item then, let us consider $s$ equidistributed
nodes in $(0,1)$, say $\{\hat t_1, \dots, \hat t_s\}$;

\item select, as the fundamental abscissae, those nodes, among the
$\{t_i\}$, which are the closest ones to the $\{\hat t_j\}$;

\item define matrix $C$ in (\ref{C}) accordingly.
\end{enumerate}

\no Clearly, for the above algorithm to provide a unique solution
(resulting in a symmetric choice of the fundamental abscissae),
the difference $k-s$ has to be even which, however, can be easily
accomplished.
\end{rem}

In order to give evidence of the effectiveness of the above
algorithm, in Figure~\ref{condC} we plot the condition number of
matrix $C=C(k,s)$, for $s=2,\dots,5$, and $k\ge s$. As one can
see, the condition number of $C(k,s)$ turns out to be nicely
bounded, for increasing values of $k$, which makes the
implementation (that we are going to analyze in the next section)
effective also when finite precision arithmetic is used. For
comparison, in Figure~\ref{condC1}  there is the same plot,
obtained by fixing the fundamental abscissae as the first $s$
ones.

\begin{figure}[hp]
\centerline{\includegraphics[width=\textwidth,height=8.5cm]{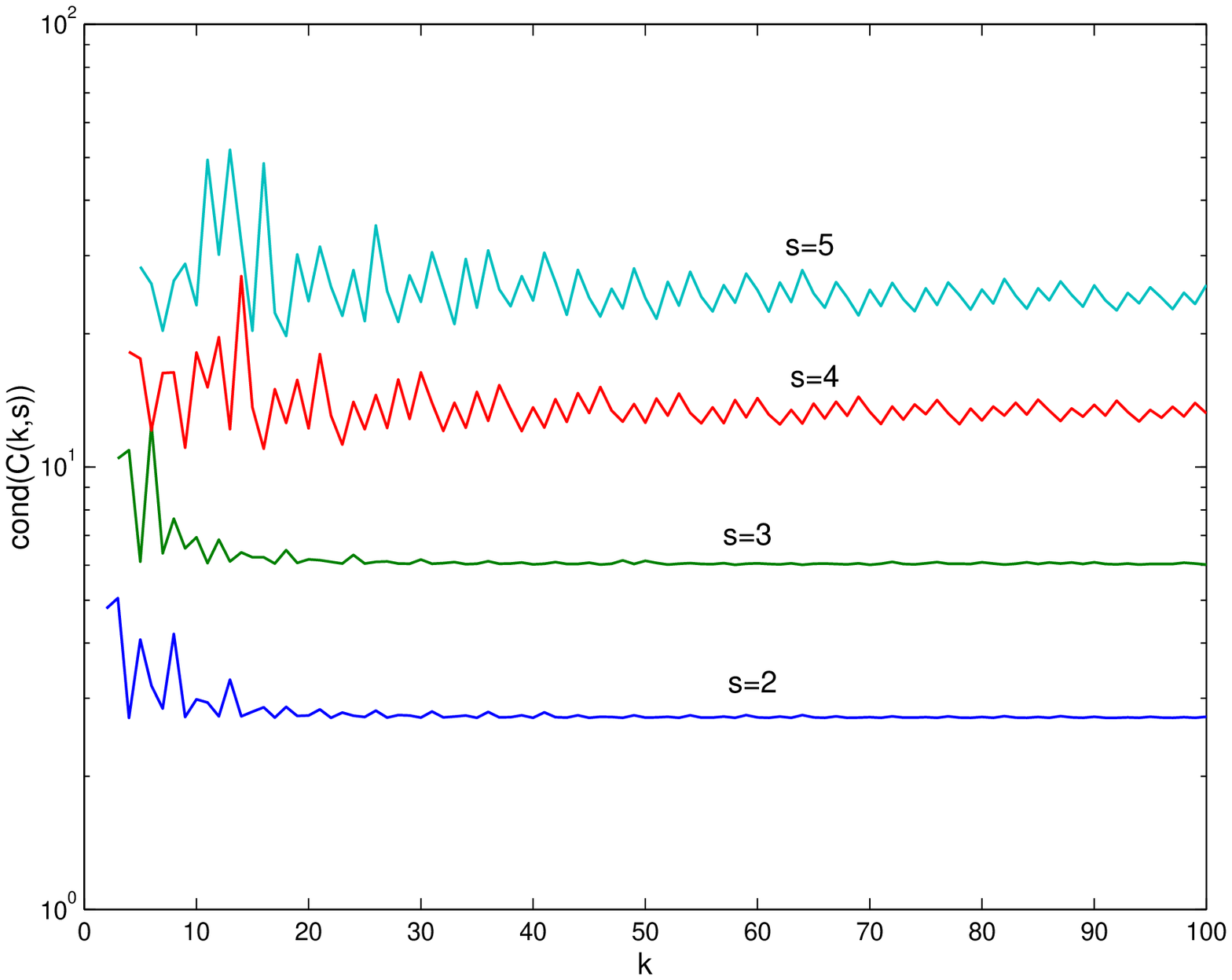}}
\caption{\protect\label{condC} Condition number of the matrix
$C=C(k,s)$, for $s=2,3,4,5$ and $k=s,s+1,\dots,100$, with  the
fundamental abscissae chosen according to the algorithm sketched
in Remark~\ref{algo}.}
\bigskip
\medskip
\centerline{\includegraphics[width=\textwidth,height=8.5cm]{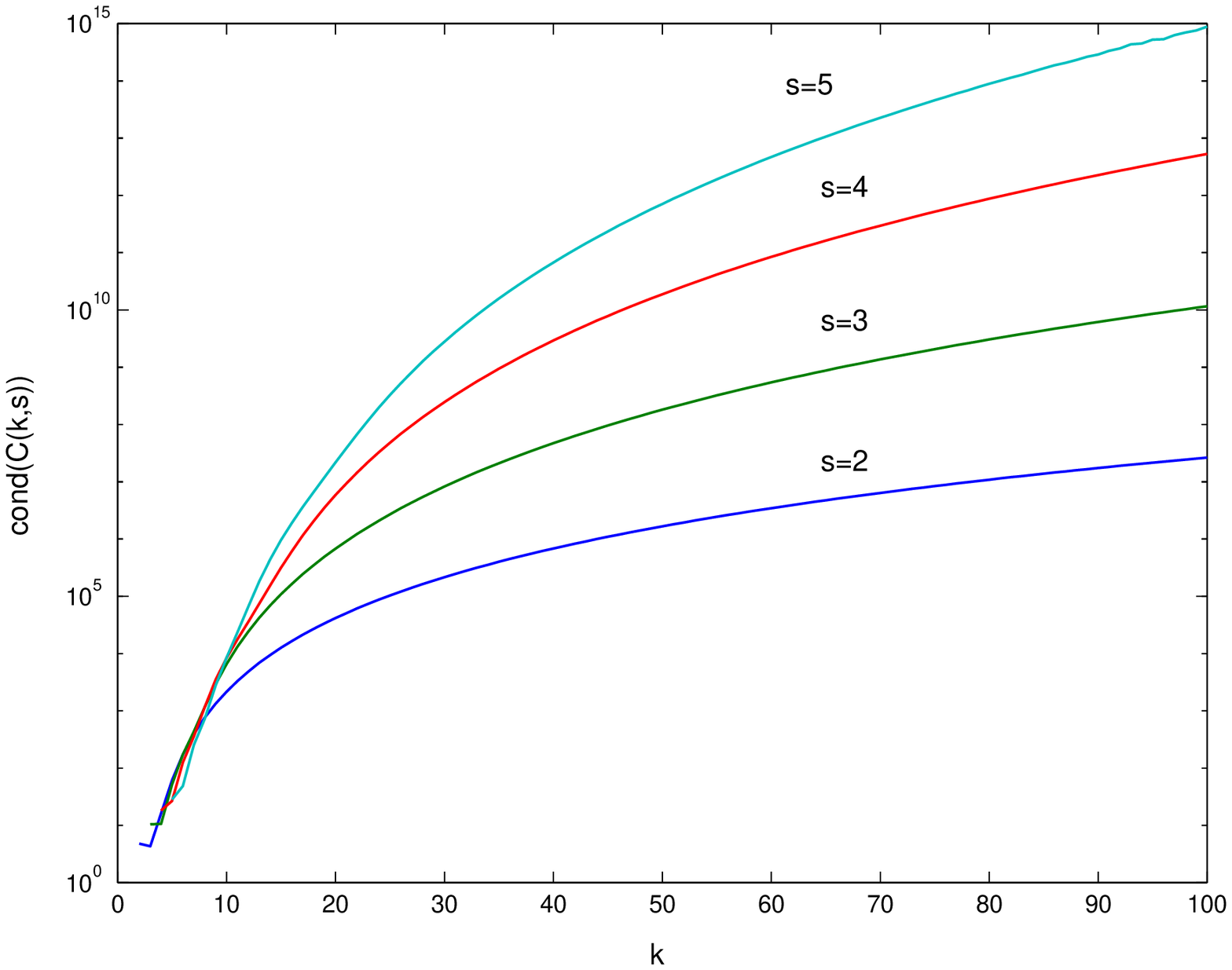}}
\caption{\protect\label{condC1} Condition number of the matrix
$C=C(k,s)$, for $s=2,3,4,5$ and $k=s,s+1,\dots,100$, with the
fundamental  abscissae chosen as the first $s$ ones.}
\end{figure}

\section{Blended HBVMs}\label{blend1}

The solution of problem (\ref{Newt}) is now cast into the framework of {\em
blended implicit methods} \cite{B00,BM02,BM04,BM07,BM08,BMM06,BT2,M04} as below
described. First of all, we observe that, since $C$ is nonsingular, we can
recast problem
(\ref{Newt}) in the {\em equivalent form}
\begin{equation}\label{Newt2}
 \gamma\left( C^{-1}\otimes I_{2m} - hI_s\otimes J_0 \right) \bfdel^{(n)} =
-\gamma C^{-1}\otimes I_{2m}\, F(\bfy_1^{(n)}) \equiv
\bfpsi_2^{(n)},
\end{equation}

\no where $\gamma>0$ is a free parameter to be chosen later. Let
us now introduce the {\em weight (matrix) function}
\begin{equation}\label{teta}
 \theta = I_s\otimes \Phi^{-1}, \qquad \Phi = I_{2m} -h\gamma
J_0\in\RR^{2m\times 2m},
\end{equation}

\no and the {\em blended formulation} of the system to be solved,
\begin{eqnarray}\nonumber
 M\bfdel^{(n)} &\equiv& \left[ \theta \left(I_s\otimes I_{2m}-hC\otimes
J_0\right) + \right.\\ \nonumber &&\left. (I-\theta)\gamma\left(
C^{-1}\otimes I_{2m}- h I_s\otimes J_0 \right)\right] \bfdel^{(n)}
\\ &=& \theta \bfpsi_1^{(n)}+(I-\theta)\bfpsi_2^{(n)}\equiv
\bfpsi^{(n)}.\label{blendNewt}
\end{eqnarray}

\no The latter system has still the same solution as the previous
ones, since it is obtained as the {\em blending}, with weights
$\theta$ and $(I-\theta)$, of the two equivalent forms
(\ref{Newt}) and (\ref{Newt2}). For iteratively solving
(\ref{blendNewt}), we use the corresponding {\em blended
iteration}, formally given by \cite{B00,BM02,BM04,BM07,BM08,BMM06,BT2,M04}:
\begin{equation}\label{blendit}\bfdel^{(n,\ell+1)} = \bfdel^{(n,\ell)}
-\theta\left( M\bfdel^{(n,\ell)}-\bfpsi^{(n)}\right), \qquad
\ell=0,1,\dots.\end{equation}

\begin{rem}\label{nonlin}
A nonlinear variant of the iteration (\ref{blendit}) can be
obtained, by setting
$$\bfy^{(n,\ell+1)} = \bfy^{(n,\ell)}+\bfdel^{(n,\ell)}, \qquad
\bfpsi_1^{(n,\ell)} = -F\left(\bfy_1^{(n,\ell)}\right),$$

\no $\bfpsi_2^{(n,\ell)}$ and $\bfpsi^{(n,\ell)}$ similarly
defined, as:
\begin{equation}\label{blendit1}\bfdel^{(n,\ell+1)} = \bfdel^{(n,\ell)}
-\theta\left( M\bfdel^{(n,\ell)}-\bfpsi^{(n,\ell)}\right), \qquad
\ell=0,1,\dots.\end{equation}\end{rem}

\begin{rem}\label{only1} We emphasize that, for actually performing the
iteration (\ref{teta})--(\ref{blendit}), as well as
(\ref{blendit1}), one has to factor only the matrix $\Phi$ in
(\ref{teta}), which has the same size as that of the continuous
problem, due to the (block) diagonal structure of \,$\theta$.\end{rem}

We end this section by observing that the above iteration
(\ref{blendit}) depends on a free parameter $\gamma$. It will be
chosen in order to optimize the convergence properties of the
iteration,  according to a linear analysis of convergence, which
is sketched in the next section.

\section{Linear analysis of convergence}\label{linear}

The linear analysis of convergence for the iterations
(\ref{blendit}) is carried out by considering the usual scalar
test equation (see, e.g., \cite{BM09} and the references therein),
$$y' = \lam y, \qquad \Re(\lam)<0.$$

\no By setting, as usual $q=h\lam$, the two equivalent
formulations (\ref{Newt}) and (\ref{Newt2}) become, respectively
(omitting, for sake of brevity, the upper index $n$),
$$(I_s-q C) \bfdel = \bfpsi_1, \qquad \gamma( C^{-1} -q I_s) \bfdel =
\bfpsi_2.$$

\no Moreover, \begin{equation}\label{tetaq}\theta \equiv \theta(q) =
(1-\gamma q)^{-1} I_s,\end{equation}

\no and the blended iteration (\ref{blendit}) becomes
\begin{equation}\label{blendq}
\bfdel^{(\ell+1)} = (I_s -\theta(q)M(q))\bfdel^{(\ell)} +
\theta(q)\bfpsi(q),\end{equation}

\no with
\begin{eqnarray}\label{Mq} M(q) &=&
\theta(q)\left(I_s-q C\right) +(I_s-\theta(q))\gamma\left(
C^{-1}-q I_s\right), \\ \bfpsi(q) &=&
\theta(q)\bfpsi_1+(I_s-\theta(q))\bfpsi_2.\nonumber\end{eqnarray}

\no Consequently, the iteration will be convergent if and only if
the spectral radius, say $\rho(q)$, of the iteration matrix,
\begin{equation}\label{Zq}Z(q) =
I_s-\theta(q)M(q),\end{equation}

\no is less than 1. The set
$$\Gamma = \left\{ q\in\CC \,:\, \rho(q)<1 \right\}$$

\no is the {\em region of convergence of the iteration}. The
iteration is said to be (see \cite{BM09} for
details):\begin{itemize}
\item {$A$-convergent}, ~ if $\CC^-\subseteq \Gamma$;

\item {$L$-convergent}, ~ if it is $A$-convergent and,
moreover, ~$\rho(q)\rightarrow 0$,~ as ~$q\rightarrow\infty$.
\end{itemize}
For the iteration (\ref{blendq}) one verifies that (see
(\ref{tetaq}), (\ref{Mq}), and (\ref{Zq}))
\begin{equation}\label{Zq1}
Z(q) = \frac{q}{(1-\gamma q)^2}C^{-1}\left(C-\gamma I_s\right)^2,
\end{equation}

\no which is the null matrix at $q=0$ and at $\infty$.
Consequently, the iteration will be $A$-convergent (and,
therefore, $L$-convergent), provided that {\em maximum
amplification factor},
$$\rho^* \equiv \max_{\Re(q)=0} \rho(q) ~\le 1.$$

\no From (\ref{Zq1}) one has that\,\footnote{Hereafter,
$\sigma(C)$ will denote the spectrum of matrix $C$.}
$$\mu\in\sigma(C)
~\Leftrightarrow~\frac{q(\mu-\gamma)^2}{\mu(1-\gamma
q)^2}\in\sigma(Z(q)).$$

\no By taking into account that
$$\max_{\Re(q)=0}\frac{|q|}{|(1-\gamma q)^2|} =
\frac{1}{2\gamma},$$

\no one then obtains that
$$\rho^* = \max_{\mu\in\sigma(C)}
\frac{|\mu-\gamma|^2}{2\gamma|\mu|}.$$

\no For the Gauss-Legendre methods (and, then, for any matrix $C$
having the same spectrum), it can be shown that \cite{BM02,BMM06}
the choice
\begin{equation}\label{gammaopt} \gamma = |\mu_{\min}|\equiv
\min_{\mu\in\sigma(C)}|\mu|,\end{equation}

\no minimizes $\rho^*$, which turns out to be given by
\begin{equation}\label{rostarmin} \rho^* = 1 -\cos \phi_{\min}
~<1, \qquad \phi_{\min}={\rm Arg}(\mu_{\min}). \end{equation}

In Table~\ref{params}, we list the optimal value of the parameter
$\gamma$, along with the corresponding maximum amplification
factor $\rho^*$, for various values of $s$, which confirm that the
iteration (\ref{blendq}) is $L$-convergent.

\begin{table}
\caption{\protect\label{params} Optimal values (\ref{gammaopt}),
and corresponding maximum amplification factors (\ref{rostarmin}),
for various values of $s$.} \centerline{\begin{tabular}{|r|r|r|}
\hline
$s$ & $\gamma$ & $\rho^*$\\
\hline
2 &0.2887 &0.1340\\
3 &0.1967 &0.2765\\
4 &0.1475 &0.3793\\
5 &0.1173 &0.4544\\
6 &0.0971 &0.5114\\
7 &0.0827 &0.5561\\
8 &0.0718 &0.5921\\
9 &0.0635 &0.6218\\
10 &0.0568 &0.6467\\
\hline
\end{tabular}}
\end{table}
\begin{rem}
We then conclude that the {\em blended iteration} (\ref{blendit})
turns out to be $L$-conver\-gent for HBVM$(k,s)$ methods, for
all $s\ge1$ and $k\ge s$.
\end{rem}

\section{Conclusions}\label{fine}
In this paper, computational aspects related to the efficient
implementation of {\em HBVM methods with $k$ steps and degree $s$
($k\ge s$) (in short, HBVM$(k,s)$)},  have been recast in the
framework of {\em blended implicit methods}. In more details, we
have seen that the discrete problem generated by HBVM$(k,s)$
amounts to a nonlinear system of (block) dimension $s$. Its
efficient solution can be obtained by considering the {\em blended
formulation} of the discrete problem, for which an efficient {\em
diagonal splitting} can be easily defined. Consequently, to
implement the nonlinear iteration, only {\em one} matrix having
the same size as that of the continuous problem has to be
factored. The free parameter, on which the {\em blended iteration}
depends on, can be easily chosen, because of the {\em isospectral
property} of HBVM$(k,s)$, resulting in an {\em $L$-convergent
iteration}. Last, but not least, also the conditioning of the
discrete problem depends only on $s$, and it appears to tend to a
(nicely) bounded limit, as $k$ grows. We plan, in the future, to
implement HBVMs in blended formulation (in short, {\em blended
HBVMs}), in a computational code for numerically solving
Hamiltonian problems.

\end{document}